\numberwithin{equation}{section}
\newcommand*{\centerfloat}{%
  \parindent \z@
  \leftskip \z@ \@plus 1fil \@minus \textwidth
  \rightskip\leftskip
  \parfillskip \z@skip}
\theoremstyle{plain}
\newtheorem{theorem}{Theorem}[section]
\newtheorem{corollary}[theorem]{Corollary}
\newtheorem*{conjecture*}{Conjecture}
\theoremstyle{definition}
\newtheorem{remark}[theorem]{Remark}
\newcommand{\ignore}[1]{}
\newcommand{\be}{\begin{equation}}
\newcommand{\ee}{\end{equation}}
\newcommand{\crc}[1]{#1\star}
\newlength{\mycellsize}
\newcommand\mytbl[1]{
\vcenter{
\let\\=\cr
\baselineskip=-16000pt \lineskiplimit=16000pt \lineskip=0pt
\halign{&\mytblcell{##}\cr#1\crcr}}}
\newcommand{\mytblcell}[1]{{%
\def \arg{#1}\def \void{}%
\ifx \void \arg
\vbox to \mycellsize{\vfil \hrule width \mycellsize height 0pt}%
\else \unitlength=\mycellsize
\begin{picture}(1,1)
\put(0,0){\makebox(1,1){$#1\vphantom{\crc{#1}}$}}
\put(0,0){\line(1,0){1}}
\put(0,1){\line(1,0){1}}
\put(0,0){\line(0,1){1}}
\put(1,0){\line(0,1){1}}
\end{picture}%
\fi}}
\newlength{\cellsize}
\newcommand\mytableau[1]{
\vcenter{
\let\\=\cr
\baselineskip=-16000pt \lineskiplimit=16000pt \lineskip=0pt
\halign{&\mytableaucell{##}\cr#1\crcr}}}
\newcommand{\mytableaucell}[1]{{%
\def \arg{#1}\def \void{}%
\ifx \void \arg
\vbox to \cellsize{\vfil \hrule width \cellsize height 0pt}%
\else \unitlength=\cellsize
\begin{picture}(1,1)
\put(0,0){\makebox(1,1){$#1\vphantom{\crc{#1}}$}}
\put(0,0){\line(1,0){1}}
\put(0,1){\line(1,0){1}}
\put(0,0){\line(0,1){1}}
\put(1,0){\line(0,1){1}}
\end{picture}%
\fi}}
\newcommand\boldtableau[1]{
\vcenter{
\let\\=\cr
\baselineskip=-16000pt \lineskiplimit=16000pt \lineskip=0pt
\halign{&\boldtableaucell{##}\cr#1\crcr}}}
\newcommand{\boldtableaucell}[1]{{%
\def \arg{#1}\def \void{}%
\ifx \void \arg
\vbox to \cellsize{\vfil \hrule width \cellsize height 0pt}%
\else \unitlength=\cellsize
\begin{picture}(1,1)
\put(0,0){\makebox(1,1){$\mathbf{#1\vphantom{\crc{#1}}}$}}
\put(0,0){\line(1,0){1}}
\put(0,1){\line(1,0){1}}
\put(0,0){\line(0,1){1}}
\put(1,0){\line(0,1){1}}
\end{picture}%
\fi}}
\title{A Note on the Higher order Tur\'{a}n inequalities for $k$-regular partitions}
\keywords{}
\begin{document}

\author{William Criag}
\address{Department of Math, University of Virginia, Charlottesville, VA 22904}
\email{wlc3vf@virginia.edu}

\author{Anna Pun}
\address{Department of Math, University of Virginia, Charlottesville, VA 22904}
\email{annapunying@gmail.com}

\thanks{We acknowledge the support of NSF Grant \# DMS-1601306.}

\begin{abstract}
Nicolas \cite{Nicolas78} and DeSalvo and Pak \cite{DP15} proved that the partition function $p(n)$ is log concave for $n \geq 25$. Chen, Jia and Wang \cite{CJW19} proved that $p(n)$ satisfies the third order Tur\'{a}n inequality, and that the associated degree 3 Jensen polynomials are hyperbolic for $n \geq 94$. Recently, Griffin, Ono, Rolen and Zagier \cite{GORZ_1} proved more generally that for all $d$, the degree $d$ Jensen polynomials associated to $p(n)$ are hyperbolic for sufficiently large $n$. In this paper, we prove that the same result holds for the $k$-regular partition function $p_k(n)$ for $k \geq 2$. In particular, for any positive integers $d$ and $k$, the order $d$ Tur\'{a}n inequalities hold for $p_k(n)$ for sufficiently large $n$. The case when $d = k = 2$ proves a conjecture by Neil Sloane that $p_2(n)$ is log concave.
\end{abstract}

\maketitle

\section{Introduction and Statement of results}

The Tur\'{a}n inequality (or sometimes called Newton inequality) arises in the study of real entire functions in Laguerre-P\'{o}lya class, which is closely related to the study of the Riemann hypothesis \cite{Dimitrov98, Szego48}. It is well known that the Riemann hypothesis is true if and only if the Riemann Xi function is in the Laguerre-P\'{o}lya class, where the Riemann Xi function is the entire order $1$ function defined by $$\Xi(z) := \frac{1}{2}\bigg(-z^2 - \frac{1}{4}\bigg)\pi^{\frac{iz}{2}-\frac{1}{4}}\Gamma\bigg(-\frac{iz}{2}+\frac{1}{4}\bigg)\zeta\bigg(-iz+\frac{1}{2}\bigg).$$

\noindent A necessary condition for the Riemann Xi function to be in the Laguerre-P\'{o}lya class is that the Maclaurin coefficients of the Xi function satisfy both the Tur\'{a}n and higher order Tur\'{a}n inequalities \cite{Dimitrov98, PS1914}.

A sequence $\{a_m\}_{m = 0}^{\infty}$ is \textit{log concave} if it satisfies the (second order) Tur\'{a}n inequality $a_m^2 \geq a_{m-1}a_{m+1}$ for all $m \geq 1$. The sequence $\{ a_m \}_{m = 0}^{\infty}$ satisfies the third order Tur\'{a}n  inequality if for $m \geq 1$, we have $$4(a_m^2 -a_{m-1}a_{m+1})(a_{m+1}^2-a_{m}a_{m+2}) \geq (a_ma_{m+1}- a_{m-1}a_{m+2})^2.$$

\noindent Nicolas \cite{Nicolas78} and DeSalvo and Pak \cite{DP15} proved that the partition function $p(n)$ is log concave for $n \geq 25$, where $p(n)$ is the number of partitions of $n$. Recall that a partition of a positive integer $n$ with $r$ parts is a weakly decreasing sequence of  $r$ positive integers that sums to $n$.  We set $p(0) = 1$. Chen \cite{Chen17} conjectured that $p(n)$ satisfies the third order Tur\'{a}n  inequality for $n \geq 95$, which was proved by Chen, Jia and Wang \cite{CJW19}. Their result also shows that the cubic polynomial $$\sum_{k=0}^3\binom{3}{k} p(n + k)x^k$$ has only real simple roots for $n \geq 95$. They also conjectured that for $d \geq 4$, there exists a positive integer $N(d)$ such that the Jensen polynomials $J^{d,n}_p(X)$ for $p(n)$ as defined in (\ref{Jensen Poly}) have only real roots for all $n \geq N(d)$. Note that the case for $d = 1$ is trivial with $N(1) = 1$, and the log concavity of $p(n)$ for $n \geq 25$ proves the case for $d =2 $ with $N(2) = 25$.  Chen, Jia and Wang \cite{CJW19} proved the case for $d = 3$ with $N(3) = 94$. Larson and Wagner \cite{LW19} proved the minimum value of $N(d)$ for $d \leq 5$ and gave an upper bound for all other $N(d)$. The conjecture was proven for all $d \geq 1$ in a recent paper by Griffin, Ono, Rolen and Zagier \cite{GORZ_1} where they proved the hyperbolicity of the Jensen polynomials associated to a large family of sequences. Given an arbitrary sequence $\alpha = (\alpha(0), \alpha(1), \alpha(2), \cdots)$ of real numbers, the associated \textit{Jensen polynomial $J_{\alpha}^{d,n}(X)$ of degree $d$ and shift $n$} is defined by

\begin{equation} \label{Jensen Poly}
    J_{\alpha}^{d,n}(X) := \sum\limits_{j=0}^d \binom{d}{j} \alpha(n+j)X^j.
\end{equation}

\noindent The Jensen polynomials also have a close relationship to the Riemann Hypothesis. Indeed, P\'{o}lya \cite{Polya1927} proved that the Riemann Hypothesis is equivalent to the hyperbolicity of all of the Jensen polynomials $J^{d,n}_\gamma(X)$ associated to the Taylor coefficients $\{\gamma(j)\}_{j=0}^{\infty}$ of $\displaystyle\frac{1}{8}\Xi\bigg(\frac{i\sqrt{x}}{2}\bigg)$. Griffin, Ono, Rolen and Zagier \cite{GORZ_1} proved that for each $d \geq 1$, all but finitely many $J^{d,n}_\gamma(X)$ are hyperbolic, which provides new evidence supporting the Riemann Hypothesis.

There is a classical result by Hermite that generalizes the Tur\'{a}n  inequalities using Jensen polynomials. Let $$f(x) = x^n + a_{n-1}x^{n-1} + \cdots + a_1x + a_0$$ be a polynomial with real coefficients. Let $\beta_1, \beta_2, \cdots, \beta_n$ be the roots of $f$ and denote $S_0 = n$ and $$S_m = \beta_1^m + \beta_2^m + \cdots + \beta_n^m, \qquad m = 1, 2, 3, \cdots $$ their Newton sums. Let $M(f)$ be the Hankel matrix of $S_0, \cdots S_{2n-2}$, i.e. $$M(f) := \begin{pmatrix}
S_0 &S_1&S_2&\cdots&S_{n-1}\\
S_1 &S_2&S_3&\cdots&S_{n}\\
S_2 &S_3&S_4&\cdots&S_{n+1}\\
\vdots &\vdots& \vdots& \vdots&\vdots\\
S_{n-1} &S_n&S_{n+1}&\cdots&S_{2n-2}\\
\end{pmatrix}.$$

\noindent Hermite's theorem \cite{NO63} states that $f$ is hyperbolic if and only if $M(f)$ is positive definite. Recall that a polynomial with real coefficients is called hyperbolic if all of its roots are real. Each $S_m$ can be expressed in terms of the coefficients $a_0, \cdots, a_{n-1}$ of $f$ for $m \geq 1$, and a matrix is positive definite if and only if all its principle minors are positive. Thus Hermite's theorem provides a set of inequality conditions on the coefficients of a hyperbolic polynomial $f$: $$\Delta_1 = S_0 = n, \Delta_2 = \begin{vmatrix}S_0&S_1\\S_1&S_2\end{vmatrix} > 0, \cdots, \Delta_n = \begin{vmatrix}
S_0 &S_1&S_2&\cdots&S_{n-1}\\
S_1 &S_2&S_3&\cdots&S_{n}\\
S_2 &S_3&S_4&\cdots&S_{n+1}\\
\vdots &\vdots& \vdots& \vdots&\vdots\\
S_{n-1} &S_n&S_{n+1}&\cdots&S_{2n-2}\\
\end{vmatrix}> 0.$$

\noindent For a given sequence $\alpha(n)$, when Hermite's theorem is applied to $J_{\alpha}^{d,n}(X)$ then the condition that all minors $\Delta_k$ of the Hankel matrix $M(J_{\alpha}^{d,n}(X))$ are positive gives a set of inequalities on the sequence $\alpha(n)$. We call these inequalities, together with the equality cases, the \textit{order $k$ Tur\'{a}n inequalities}. In other words,  $J_{\alpha}^{d,n}(X)$ is hyperbolic  if and only if the subsequence $\{\alpha(n + j)\}_{j=0}^{\infty}$ satisfies all the order $k$ Tur\'{a}n (strict) inequalities for all $1\leq k \leq d$. In particular, the result in \cite{GORZ_1} shows that for any $d \geq 1$, the partition function $\{p(n)\}$ satisfies the order $d$ Tur\'{a}n inequality for sufficiently large $n$.


For a positive integer $k \geq 2$, let the \textit{k-regular partition function} $p_k(n)$ be defined as the number of partitions of $n$ in which none of the parts are multiples of $k$. For any fixed integer $k \geq 2$, the generating function for the sequence $\{p_k(n)\}_{n=0}^\infty$ is given by

$$\sum\limits_{n \geq 0} p_k(n) q^n = \prod\limits_{n=1}^\infty \dfrac{(1-q^{kn})}{(1-q^n)}.$$

\noindent Sloane conjectured that the sequence of partitions of $n$ with distinct parts, is log concave for sufficiently large $n$. It is well known that the number of $k$-regular partitions is equal to the number of partition with parts appearing at most $k-1$ times. Therefore, Sloane's conjecture is equivalent to the log-concavity of $p_2(n)$ for sufficiently large $n$. In this paper, we prove Sloane's conjecture, and in fact we prove the more general result that for any $d$, the $k$-regular partition functions $p_k(n)$ satisfy the order $d$ Tur\'{an} inequality for sufficiently large $n$.

As has been done for other sequences, we can define the Jensen polynomials $J_{p_k}^{d,n}(X)$ of degree $d$ and shift $n$ for the sequence $p_k(n)$ by setting $\alpha = p_k$ in (\ref{Jensen Poly}). In the spirit of much other work on the Tur\'{a}n inequalities, we may reframe the conjecture in terms of the hyperbolicity of the associated Jensen polynomials:

\begin{conjecture*}\label{NS conjecture}
	Let $d \geq 1, k \geq 2$ be integers. There exists a positive integer $N_k(d)$ such that the degree $d$ Jensen polynomial $J_{p_k}^{d,n}(X)$ associated to $p_k(n)$ are hyperbolic for all $n \geq N(d)$.
\end{conjecture*}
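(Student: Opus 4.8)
The plan is to deduce the statement from the general criterion of Griffin, Ono, Rolen and Zagier \cite{GORZ_1}, whose method applies to any positive sequence whose logarithm possesses a sufficiently precise asymptotic expansion. Writing $L(n) := \log p_k(n)$ and treating $n$ as a continuous variable, their theorem reduces eventual hyperbolicity to three conditions: that $A(n) := L'(n) \to 0$, that $\delta(n) := \sqrt{-L''(n)} \to 0$, and that the higher derivatives decay faster than the matching power of $\delta(n)$, namely $L^{(m)}(n) = o(\delta(n)^m)$ for each fixed $m \geq 3$. Under these hypotheses, the suitably renormalized and rescaled Jensen polynomials $J_{p_k}^{d,n}(X)$ converge, uniformly on compact sets, to the Hermite polynomial $H_d(X)$; since $H_d$ has $d$ distinct real roots, continuity of roots (Hurwitz's theorem) forces $J_{p_k}^{d,n}(X)$ to have $d$ distinct real roots, and hence to be hyperbolic, for all $n \geq N(d)$ (with $N$ permitted to depend also on the fixed $k$). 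Thus the problem reduces to producing an asymptotic expansion of $\log p_k(n)$ and verifying the three decay conditions.

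The required expansion will come from the circle method applied to the infinite product
$$\sum_{n \geq 0} p_k(n) q^n = \prod_{n=1}^{\infty} \frac{1-q^{kn}}{1-q^n},$$
which is an eta-quotient whose associated Dirichlet series is $D(s) = (1-k^{-s})\zeta(s)$. This series continues to an entire function except for a simple pole at $s=1$ with residue $(k-1)/k$, and $D(0) = 0$, so Meinardus' theorem (or Wright's refinement of the saddle-point method, which yields as many terms as desired) gives
$$\log p_k(n) = \pi\sqrt{\tfrac{2(k-1)}{3k}}\,\sqrt{n} \;-\; \tfrac34\log n \;+\; c_k \;+\; \sum_{i \geq 1}\gamma_{k,i}\, n^{-i/2},$$
valid to any fixed order with a power-saving error and, crucially, in a form that may be differentiated in $n$. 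For $k=2$ this recovers the classical asymptotic for partitions into distinct parts, so the case $d=k=2$ will yield Sloane's conjecture.

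Granting this expansion, verifying the GORZ hypotheses is a direct computation: differentiating term by term gives $A(n) = L'(n) \asymp n^{-1/2} \to 0$ and $\delta(n) = \sqrt{-L''(n)} \asymp n^{-3/4} \to 0$, while for $m \geq 3$ one has $L^{(m)}(n) \asymp n^{1/2 - m}$; since $\tfrac12 - m < -\tfrac{3m}{4}$ precisely when $m > 2$, the decay condition $L^{(m)}(n) = o(\delta(n)^m)$ holds for all $m \geq 3$. For each fixed $d$ only finitely many terms of the expansion are needed, so a finite truncation of the circle-method output suffices, and invoking the GORZ criterion completes the argument.

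I expect the genuine obstacle to be purely analytic: producing the asymptotic expansion of $\log p_k(n)$ to the order demanded by a given $d$, with explicit and uniform error control, and in particular justifying that it may be treated as a smooth function of a real variable $n$ so that the Taylor expansion of $L(n+j)$ underpinning the GORZ input is legitimate. While Meinardus-type asymptotics for such eta-quotients are classical, extracting arbitrarily many correction terms with rigorous remainder bounds — rather than merely the leading term — is the technically delicate step; once the expansion is in hand, the verification of the hyperbolicity criterion is routine.
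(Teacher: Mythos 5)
Your overall strategy is the same as the paper's: reduce the conjecture to the Griffin--Ono--Rolen--Zagier hyperbolicity criterion and feed it an asymptotic expansion of $\log p_k(n)$. Your constants also check out: $\pi\sqrt{2(k-1)/3k} = 4\pi\sqrt{m_k}$ with $m_k = (k-1)/24k$, and the exponents $A(n)\asymp n^{-1/2}$, $\delta(n)\asymp n^{-3/4}$, $L^{(m)}(n)\asymp n^{1/2-m}$ match what the paper obtains. The two substantive differences are these. First, you invoke the derivative form of the criterion (conditions on $L'(n)$, $L''(n)$, $L^{(m)}(n)$ for a smooth interpolation $L$), which forces you to justify differentiating the asymptotic expansion; the paper instead uses the discrete form of GORZ Theorem 3, expanding $\log\bigl(p_k(n+j)/p_k(n)\bigr)$ directly in powers of $1/n$ by binomial and logarithm series, so no differentiation of error terms is ever needed. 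Second --- and this is where your proposal has a genuine gap, which you yourself flag --- you lean on ``Meinardus' theorem or Wright's refinement'' to produce the full expansion with power-saving, differentiable errors. Meinardus in its standard form gives only the leading asymptotic $p_k(n)\sim C n^{\kappa}e^{c\sqrt{n}}$, and pushing Wright's circle method to arbitrarily many correction terms with rigorous remainders is real work, not a citation. The paper closes exactly this gap by using Hagis' 1971 exact Rademacher-type formula, $p_k(n) = 2\pi\sqrt{m_k/(k(n+km_k))}\,I_1\bigl(4\pi\sqrt{m_k(n+km_k)}\bigr)\bigl(1+O(e^{-c\sqrt{n}})\bigr)$, from which the expansion of $\log p_k(n)$ to all orders in $1/n$ follows immediately from the classical asymptotic series for the Bessel function $I_1$, with exponentially small error. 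So your plan is sound and would work, but the ``technically delicate step'' you defer is precisely the one the existing literature (Hagis) already supplies, and adopting the discrete form of the criterion would let you avoid the differentiability issue entirely.
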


For a natural number $d$, we define the Hermite polynomials $H_d(X)$ by the generating function

\begin{equation*} 
    e^{-t^2 + Xt} = \sum\limits_{d=0}^\infty H_d(X) \dfrac{t^d}{d!} = 1 + Xt + (X^2 - 2)\dfrac{t^2}{2} + (X^3 - 6X) \dfrac{t^3}{6} + \cdots.
\end{equation*}

\noindent The Hermite polynomials are orthogonal polynomials with respect to a slightly different normalization than is standard. In view of these definitions, we can now state the main results of the paper.

\begin{theorem} \label{Main Theorem}
If $k \geq 2$ and $d \geq 1$, then $$\lim\limits_{n \to \infty} \widehat{J}^{d,n}_{p_k}(X) = H_d(X),$$ uniformly for $X$ on compact subsets of $\mathbb{R}$, where $\widehat{J}^{d,n}_{p_k}(X)$ are renormalized Jensen polynomials for $p_k(n)$ as defined in (\ref{J-Hat Def}).
\end{theorem}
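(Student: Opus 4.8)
The plan is to deduce the theorem from the general hyperbolicity criterion of Griffin, Ono, Rolen and Zagier \cite{GORZ_1}, which reduces everything to soft asymptotics of $p_k(n)$. Writing
\[
\log\frac{p_k(n+j)}{p_k(n)}=\sum_{i\ge 1}g_i(n)\,j^i
\]
(an expansion valid asymptotically in $n$ for each fixed $j$), I would set $A(n)=g_1(n)$ and, once one checks $g_2(n)<0$, set $\delta(n)=\sqrt{-g_2(n)}$; these are exactly the parameters entering the renormalization (\ref{J-Hat Def}). After the affine rescaling of the variable and the normalizing factor $\delta(n)^{-d}/p_k(n)$ built into (\ref{J-Hat Def}), the linear term $g_1(n)j$ is absorbed and $\widehat J^{d,n}_{p_k}(X)$ becomes, heuristically, a scaled $d$-th finite difference of the Gaussian weight $\exp\!\big(g_2(n)j^2+g_3(n)j^3+\cdots\big)$. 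Such a difference, divided by $\delta^d$, is governed by the $d$-th derivative of $e^{-x^2}$ and hence by $H_d(X)$, so the convergence is automatic provided $\delta(n)\to 0$ and the higher coefficients are negligible, i.e. $g_i(n)=o(\delta(n)^i)$ for $3\le i\le d$. The whole problem is therefore to produce these asymptotics and verify these three conditions.

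For the asymptotic input I would analyze the eta-quotient generating function. Writing $q=e^{-t}$ and using $\prod_{k\nmid n}(1-q^n)^{-1}=\prod_{n\ge1}(1-q^n)^{-1}\big/\prod_{n\ge1}(1-q^{kn})^{-1}$ together with the classical Dedekind-eta asymptotics for each factor gives
\[
\log\sum_{n\ge0}p_k(n)q^n=\frac{\pi^2(k-1)}{6k}\cdot\frac1t-\tfrac12\log k+O(t)\qquad(t\to0^+).
\]
A saddle-point (Laplace/Meinardus) evaluation, or the Hardy--Ramanujan--Rademacher circle method applied to this eta-quotient, then yields a full expansion
\[
\log p_k(n)=c_k\sqrt n-\beta\log n+C_k+\sum_{i\ge1}e_i\,n^{-i/2},\qquad c_k=\pi\sqrt{\tfrac{2(k-1)}{3k}},
\]
with effectively computable constants. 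Differencing this in $n$ and expanding $\sqrt{n+j}$ and $\log(n+j)$ for fixed $j$ gives $g_i(n)\sim c_k\binom{1/2}{i}n^{1/2-i}$; in particular $A(n)\sim\tfrac{c_k}{2}n^{-1/2}$ and $g_2(n)\sim-\tfrac{c_k}{8}n^{-3/2}<0$ (using $k\ge2$, so $c_k>0$), whence $\delta(n)\asymp n^{-3/4}\to0$.

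The verification of the remaining conditions is then a clean exponent comparison: since $g_i(n)\asymp n^{1/2-i}$ while $\delta(n)^i\asymp n^{-3i/4}$, the relation $g_i(n)=o(\delta(n)^i)$ holds exactly when $\tfrac12-i<-\tfrac{3i}{4}$, that is for every $i\ge3$, the borderline case $i=2$ reproducing the defining relation $g_2\asymp-\delta^2$. Feeding $A(n)$, $\delta(n)$ and these bounds into the criterion of \cite{GORZ_1} yields the stated limit $\widehat J^{d,n}_{p_k}(X)\to H_d(X)$, uniformly on compact subsets of $\RR$; since $H_d$ has $d$ simple real roots, Hurwitz's theorem then upgrades this (beyond the theorem itself) to hyperbolicity of $J^{d,n}_{p_k}$ for all large $n$, which is the conjecture.

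The main obstacle is the first analytic step: obtaining the asymptotic expansion of $p_k(n)$ with error terms sharp enough to pin down every $g_i(n)$ with $3\le i\le d$. Leading-order Hardy--Ramanujan asymptotics do not suffice, since it is precisely the comparison of the (small) coefficients $g_i$ against the (also small) powers $\delta^i$ that must come out right after the $\delta^{-d}$ renormalization. I would therefore spend the most care either invoking a Meinardus-type theorem and checking its hypotheses for $\prod_{n\ge1}(1-q^{kn})/(1-q^n)$, or making the circle-method minor-arc estimates explicit, so that the full expansion of $\log p_k(n)$ holds with a controlled error uniformly in the relevant range; once that is in hand, the remaining steps are routine bookkeeping.
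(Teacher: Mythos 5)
Your skeleton is exactly the paper's: apply the criterion of Griffin--Ono--Rolen--Zagier with $A(n)$ and $\delta(n)$ read off from the expansion of $\log(p_k(n+j)/p_k(n))$ in powers of $j$, then check $g_2(n)<0$, $\delta(n)\to 0$, and that the higher coefficients are negligible. Your constants are correct ($c_k=\pi\sqrt{2(k-1)/(3k)}=4\pi\sqrt{m_k}$ with $m_k=(k-1)/24k$, and $\delta(n)\asymp n^{-3/4}$), and the exponent comparison $n^{1/2-i}=o(n^{-3i/4})\iff i>2$ is precisely the bookkeeping the paper carries out.

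The one step you have not actually closed is the one you flag yourself: establishing the all-orders expansion $\log p_k(n)=c_k\sqrt n-\tfrac34\log n+C_k+\sum_{i\ge1}e_i n^{-i/2}$ with sufficiently small error. The paper does not redo any circle-method analysis here; it quotes Hagis \cite{Hagis71}, who proved a Rademacher-type exact formula for $p_k(n)$ whose main term is $2\pi\sqrt{m_k/(k(n+km_k))}\,I_1\big(4\pi\sqrt{m_k(n+km_k)}\big)$ with relative error $O(e^{-c\sqrt n})$; the standard asymptotic expansion of $I_1$ at infinity then yields the full expansion of $\log p_k(n)$ with exponentially small error, which is far more uniformity than needed. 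Of your two proposed substitutes, note that a Meinardus-type theorem only delivers the leading asymptotic with a power-saving relative error, which is not sharp enough to control the coefficients $g_i(n)$ after the $\delta(n)^{-d}$ renormalization; you would genuinely need the exact-formula (explicit circle-method) route, which is essentially re-proving Hagis' theorem. Once that input is supplied by citation or re-derivation, your argument and the paper's coincide.
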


\begin{corollary} \label{Corollary}
If $k \geq 2$ and $d \geq 1$, then $J^{d,n}_{p_k}(X)$ is hyperbolic for sufficiently large $n$.
\end{corollary}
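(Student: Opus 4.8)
The plan is to deduce the Corollary directly from Theorem~\ref{Main Theorem} by a standard continuity-of-roots argument, exploiting that the limiting Hermite polynomial is hyperbolic with \emph{simple} zeros and that the renormalization relating $\widehat{J}^{d,n}_{p_k}$ to $J^{d,n}_{p_k}$ is an invertible real affine change of variable. First I would record the classical fact that $H_d(X)$, being (up to normalization) an orthogonal polynomial, has exactly $d$ distinct real roots $r_1 < r_2 < \cdots < r_d$; in particular it is monic of degree $d$ and changes sign at each $r_i$.

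Next I would fix $d+1$ real test points $x_0 < r_1 < x_1 < r_2 < \cdots < r_d < x_d$ chosen so that the values $H_d(x_0), H_d(x_1), \ldots, H_d(x_d)$ are nonzero and alternate in sign, and set $K = [x_0, x_d]$. By Theorem~\ref{Main Theorem}, $\widehat{J}^{d,n}_{p_k}(X) \to H_d(X)$ uniformly on $K$, so for all sufficiently large $n$ each value $\widehat{J}^{d,n}_{p_k}(x_i)$ has the same sign as $H_d(x_i)$. Since $H_d$ has exactly one root in each $(x_{i-1},x_i)$, the signs alternate, so by the intermediate value theorem $\widehat{J}^{d,n}_{p_k}$ acquires a real root in each of the $d$ intervals $(x_{i-1},x_i)$, hence at least $d$ distinct real roots. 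Moreover the leading coefficient of $\widehat{J}^{d,n}_{p_k}(X)$, a polynomial of degree at most $d$ in $X$, converges to that of $H_d$, namely $1$, so its degree is exactly $d$ for large $n$. Thus all $d$ of its roots are real, i.e. $\widehat{J}^{d,n}_{p_k}(X)$ is hyperbolic for all sufficiently large $n$.

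Finally I would transfer hyperbolicity back to $J^{d,n}_{p_k}$. By its definition in (\ref{J-Hat Def}), the renormalized polynomial is of the form $\widehat{J}^{d,n}_{p_k}(X) = c_n\, J^{d,n}_{p_k}(a_n X + b_n)$ with real constants $c_n > 0$, $a_n > 0$, and $b_n \in \mathbb{R}$, the positivity coming from $p_k(n) > 0$, the positive sequence $\delta(n)$, and the real sequence $A(n)$ that enter the substitution. Such an orientation-preserving real affine substitution followed by multiplication by a positive constant is a bijection of $\mathbb{R}$ carrying real roots to real roots in both directions, so $J^{d,n}_{p_k}(X)$ has exactly as many real roots as $\widehat{J}^{d,n}_{p_k}(X)$. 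Hence $J^{d,n}_{p_k}(X)$ is hyperbolic for all sufficiently large $n$.

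The genuinely hard analytic input is Theorem~\ref{Main Theorem} itself, not this deduction: establishing the asymptotic expansion of $\log p_k(n)$ and verifying the Griffin--Ono--Rolen--Zagier \cite{GORZ_1} hypotheses is where the real work lies. Within the Corollary the only delicate points are that $H_d$ has \emph{simple} (not merely real) zeros, which is exactly what lets uniform convergence force the full count of $d$ real roots, and that the renormalization is a genuine real affine equivalence with $a_n \neq 0$, so that no roots are lost, created, or pushed off to infinity in passing between $\widehat{J}^{d,n}_{p_k}$ and $J^{d,n}_{p_k}$.
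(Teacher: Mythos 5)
Your argument is correct, and it is essentially the route the paper takes: the paper obtains the Corollary immediately from the ``Furthermore'' clause of the quoted result of Griffin--Ono--Rolen--Zagier (Theorem~\ref{GORZ_1 Thm 3}), and what you have written is precisely the standard proof of that clause (sign changes at test points interlacing the simple real zeros of $H_d(X)$, plus invertibility of the positive affine renormalization in (\ref{J-Hat Def})). So you have correctly supplied the details that the paper outsources to the citation, with no gaps.
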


\begin{remark}
By Corollary \ref{Corollary}, there exists a minimal natural number $N_k(d)$ such that $J^{d,n}_{p_k}(X)$ is hyperbolic for all $n \geq N_k(d)$. These numbers are not the focus of this paper, however a brief discussion is worthwhile, as these numbers dictate the effectiveness of the main theorem. The following table provides conjectural values of $N_k(d)$ for small $k$ and $d$.

\begin{center}
\begin{tabular}{|c|c|c|c|c|} \hline
$d$ & $N_2(d)$ & $N_3(d)$ & $N_4(d)$ & $N_5(d)$ \\ \hline
$2$ & $32$ & $57$ & $16$ & $41$ \\ \hline
$3$ & $120$ & $184$ & $63$ & $136$ \\ \hline
$4$ & $266$ & $390$ & $137$ & $294$ \\ \hline
\end{tabular}
\end{center}

\noindent We may also ask about trends as $k$ or $d$ vary. As $k$ increases, the fact that $p_k(n) = p(n)$ for all $n < k$ suggests that for any fixed $d$, $N_k(d) = N(d)$ for all sufficiently large $k$. The close connection between $p_k(n)$ and $p(n)$ also suggests that for $k$ fixed, $N_k(d)$ will grow like $N(d)$. In particular, it seems likely that $N_k(d)/N(d)$ is bounded (and possibly converges) as $d \rightarrow \infty$. In this way, any conjecture or theorem about $N(d)$ ought to have an analog for $N_k(d)$. These problems are interesting in their own right, but are not taken up here.
\end{remark}

\begin{remark}
We are motivated to study the Tur\'{an} inequalities for $p_k(n)$ because of Sloane’s conjecture. The work of Griffin, Ono, Rolen and Zagier \cite{GORZ_1} provides a criterion (see Theorem \ref{GORZ_1 Thm 3}) that can be used to establish the Tur\'{an} inequalities for Fourier coefficients of many modular forms. We employ this criterion in the proof of the Tur\'{an} inequalities for $p_k(n)$. One might suspect that the Tur\'{an} inequalities will hold for all modular forms which are eta-quotients. This is not true. First of all the inequalities require the Fourier coefficients are always nonnegative for large $n$. Moreover, the method of proof requires that a corresponding modular form have a pole at some cusps. The $k$-regular partition functions are perhaps the most natural family for which these requirements hold. Using our methods, similar results follow mutatis mutandis for other forms which satisfy these two conditions.
\end{remark}

In Section 2, we provide a proof of Theorem \ref{Main Theorem}, and thus of the conjecture of Sloane. We first present the P\'{o}lya-Jensen method for proving the hyperbolicity of Jensen polynomials. We then present the work of Hagis \cite{Hagis71} on an explicit formula for $p_k(n)$. Finally, we prove Theorem \ref{Main Theorem} by using asymptotic expansions of Hagis' formulas to derive the necessary conditions for the use of the P\'{o}lya-Jensen method.

\section{Proof of Theorem \ref{Main Theorem}}
\subsection{Polya-Jensen method}

In 1927, P\'{o}lya \cite{Polya1927} demonstrated that the Riemann hypothesis is equivalent to the hyperbolicity of the Jensen polynomials for $\zeta(s)$ at its point of symmetry. This approach to the Riemann hypothesis has not seen much progress until the recent work of Griffin, Ono, Rolen and Zagier \cite{GORZ_1}, in which the hyperbolicity of these polynomials is proved for all $d \geq 1$ for sufficiently large $n$. The primary method utilized by this paper can be stated as follows:

\begin{theorem}[Theorem 3 \& Corollary 4, \cite{GORZ_1}]\label{GORZ_1 Thm 3}
Let $\{ \alpha(n) \}$, $\{ A(n) \}$, and $\{ \delta(n) \}$ be sequences of positive real numbers such that $\delta(n) \to 0$ as $n \to \infty$. Suppose further that for a fixed $d \geq 1$ and for all $0 \leq j \leq d$, we have

\begin{equation*}
    \log\bigg( \dfrac{\alpha(n+j)}{\alpha(n)} \bigg) = A(n)j - \delta(n)^2j^2 + \sum_{i = 3}^d g_i(n) j^i + o\big( \delta(n)^d \big) \hspace{.3in} \text{ as } n \to \infty.
\end{equation*}

\noindent where $g_i(n) = o(\delta(n)^i)$ for each $0 \leq i \leq d$. Then the renormalized Jensen polynomials  $\widehat{J}^{d,n}_{\alpha}(X) = \dfrac{\delta(n)^{-d}}{\alpha(n)} J^{d,n}_{\alpha}\bigg( \dfrac{\delta(n)X - 1}{\exp(A(n))}\bigg)$ satisfy $\lim\limits_{n \to \infty} \widehat{J}^{d,n}_{\alpha}(X) = H_d(X)$ uniformly for $X$ in any compact subset of $\mathbb{R}$. Furthermore, this implies that the polynomials $J^{d,n}_{\alpha}(X)$ are hyperbolic for all but finitely many values of $n$.
\end{theorem}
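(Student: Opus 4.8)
The plan is to compute $\widehat{J}^{d,n}_{\alpha}(X)$ explicitly, substitute the hypothesis on $\log(\alpha(n+j)/\alpha(n))$, and show that after renormalization the $A(n)$-dependence cancels, leaving a finite sum whose small-$\delta(n)$ asymptotics are governed by a Gaussian integral reproducing $H_d(X)$. Writing $\delta = \delta(n)$, the definition of the Jensen polynomial gives
\begin{equation*}
\widehat{J}^{d,n}_{\alpha}(X) = \delta^{-d}\sum_{j=0}^{d}\binom{d}{j}\frac{\alpha(n+j)}{\alpha(n)}\,\exp(-A(n)j)\,(\delta X-1)^{j}.
\end{equation*}
Inserting $\alpha(n+j)/\alpha(n) = \exp(A(n)j - \delta^2 j^2 + o(\delta^d))$, the factors $\exp(A(n)j)$ and $\exp(-A(n)j)$ cancel, so
\begin{equation*}
\widehat{J}^{d,n}_{\alpha}(X) = \delta^{-d}\Big(\sum_{j=0}^{d}\binom{d}{j}\exp(-\delta^2 j^2)\,(\delta X-1)^{j}\Big)(1+o(\delta^d)).
\end{equation*}
Since each summand is bounded uniformly for $X$ in a compact set as $\delta\to 0$, the $o(\delta^d)$ correction contributes only $o(\delta^d)$ to the sum. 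Hence it suffices to analyze $F_d(\delta,X) := \sum_{j=0}^{d}\binom{d}{j}\exp(-\delta^2 j^2)(\delta X-1)^j$ and prove $F_d(\delta,X) = \delta^d H_d(X) + o(\delta^d)$ uniformly on compacta.

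The central step is the leading asymptotics of $F_d$. I would use the Gaussian identity $\exp(-\delta^2 j^2) = \pi^{-1/2}\int_{-\infty}^{\infty} e^{-t^2}e^{2i\delta t j}\,dt$ to linearize the troublesome $j^2$ in the exponent. Substituting and interchanging the finite sum with the integral collapses the $j$-sum via the binomial theorem:
\begin{equation*}
F_d(\delta,X) = \frac{1}{\sqrt{\pi}}\int_{-\infty}^{\infty} e^{-t^2}\Big(1 + e^{2i\delta t}(\delta X-1)\Big)^{d}\,dt.
\end{equation*}
The bracket vanishes at $\delta=0$, and its $\delta$-derivative there equals $X-2it$, so $1 + e^{2i\delta t}(\delta X-1) = \delta(X-2it) + O(\delta^2 t^2)$; raising to the $d$-th power extracts a factor $\delta^d$. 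The leading coefficient is then $\pi^{-1/2}\int_{-\infty}^{\infty} e^{-t^2}(X-2it)^d\,dt$, which I would identify with $H_d(X)$ by summing against $s^d/d!$ and using $\pi^{-1/2}\int e^{-t^2}e^{s(X-2it)}\,dt = e^{-s^2+sX}$, precisely the generating function defining the $H_d$. This yields $F_d(\delta,X)=\delta^d H_d(X)+o(\delta^d)$, and therefore $\lim_{n\to\infty}\widehat{J}^{d,n}_{\alpha}(X)=H_d(X)$ uniformly on compact subsets of $\mathbb{R}$.

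For the hyperbolicity clause, I would first observe that $\widehat{J}^{d,n}_{\alpha}$ has degree exactly $d$ with leading coefficient $\exp(-\delta^2 d^2 + o(\delta^d))\to 1$, so convergence to $H_d$ holds coefficientwise. Since $H_d$, being an orthogonal polynomial, has $d$ distinct real zeros, Hurwitz's theorem (equivalently, a sign-change argument on an interval $[-R,R]$ containing all zeros of $H_d$, together with leading-term control ruling out zeros outside $[-R,R]$ for large $n$) gives that $\widehat{J}^{d,n}_{\alpha}$ has $d$ real zeros, hence is hyperbolic, for all large $n$. Finally, hyperbolicity is invariant under the real invertible affine substitution $X \mapsto (\delta X - 1)/\exp(A(n))$, whose slope $\delta/\exp(A(n))$ is nonzero because $\delta(n)>0$, and under multiplication by the positive constant $\delta^{-d}/\alpha(n)$; thus $J^{d,n}_{\alpha}$ is hyperbolic for all but finitely many $n$.

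I expect the main obstacle to be making the estimate $F_d(\delta,X)=\delta^d H_d(X)+o(\delta^d)$ rigorous and uniform in $X$: the pointwise expansion $1+e^{2i\delta t}(\delta X-1)=\delta(X-2it)+O(\delta^2t^2)$ degrades once $|t|\gtrsim \delta^{-1}$, so the error control must exploit the super-exponential decay of $e^{-t^2}$. Concretely I would split the integral at $|t|=\delta^{-1/2}$, where the tail is $o(\delta^N)$ for every $N$ while the bulk admits a uniform Taylor estimate in $\delta$. Handling this tail and uniformity bookkeeping, rather than the algebra, is the delicate part of the argument.
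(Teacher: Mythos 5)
Your proposal is correct, but there is nothing in this paper to compare it against: the statement is quoted verbatim from \cite{GORZ_1} (Theorem 3 and Corollary 4 of Griffin--Ono--Rolen--Zagier), and the present paper only applies it, offering no proof. So the relevant comparison is with the proof in that reference. There the argument is, in essence, algebraic and coefficientwise: after the same cancellation of $\exp(\pm A(n)j)$ that you perform, one expands $e^{-\delta^2 j^2}$ as a power series and $(\delta X-1)^j$ by the binomial theorem, and then uses the finite-difference identities $\sum_{j=0}^d(-1)^j\binom{d}{j}j^k=0$ for $0\le k<d$ and $\sum_{j=0}^d(-1)^j\binom{d}{j}j^d=(-1)^d\,d!$ to see that all contributions of order lower than $\delta^d$ cancel, while the surviving terms reproduce the Hermite coefficients $(-1)^m d!/(m!\,(d-2m)!)$. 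Your route is genuinely different and analytic: you linearize the $j^2$ in the exponent via $e^{-\delta^2j^2}=\pi^{-1/2}\int_{-\infty}^{\infty}e^{-t^2}e^{2i\delta tj}\,dt$, collapse the $j$-sum by the binomial theorem, and land on the integral representation $H_d(X)=\pi^{-1/2}\int_{-\infty}^{\infty}e^{-t^2}(X-2it)^d\,dt$, verified against the generating function $e^{-s^2+sX}$. This buys a one-shot, manifestly uniform estimate with a single transparent error term, at the price of an interchange-and-tail analysis; the finite-difference proof is more elementary and purely combinatorial. Your deduction of hyperbolicity (sign changes at points interlacing the $d$ distinct real zeros of $H_d$, degree and leading-coefficient control, then invariance under the real affine substitution with nonzero slope $\delta(n)/\exp(A(n))$ and positive scaling) is exactly how the corollary follows from the limit statement in \cite{GORZ_1}.

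Two small repairs. First, your displayed equation factoring $(1+o(\delta^d))$ out of the whole sum is not literally correct: the sum $F_d(\delta,X)$ is itself of size $O(\delta^d)$, whereas the per-term errors $e^{o(\delta^d)}-1$ produce an \emph{additive} error; the true statement, which your surrounding sentence does assert, is $\delta^d\,\widehat{J}^{d,n}_{\alpha}(X)=F_d(\delta,X)+o(\delta^d)$ uniformly on compacta. Second, the tail-splitting you flag as the delicate point is actually unnecessary: the elementary bounds $|1-e^{2i\delta t}+2i\delta t|\le 2\delta^2t^2$ and $|e^{2i\delta t}-1|\le 2\delta|t|$, valid for \emph{all} real $t$, give $1+e^{2i\delta t}(\delta X-1)=\delta(X-2it)+E$ with $|E|\le C\delta^2(1+t^2)$ globally ($C$ uniform for $X$ in a compact set), and expanding the $d$-th power then yields an error $O\big(\delta^{d+1}(1+t^2)^d\big)$, which is integrable against $e^{-t^2}$; hence $F_d(\delta,X)=\delta^d H_d(X)+O(\delta^{d+1})$ with no splitting of the integral at all.
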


\begin{remark}
The statement of this theorem in \cite{GORZ_1} has a typo. The correct statement of the logarithm condition is stated in (15) of \cite{GORZ_1}.
\end{remark}

Because the conditions for this result are so general, the method can be utilized in a wide variety of circumstances. For instance, it is shown in Theorem 7 of \cite{GORZ_1} that if $a_f(n)$ are the (real) Fourier coefficients of a modular form $f$ on $\text{SL}_2(\mathbb{Z})$ holomorphic apart from a pole at infinity, then there are sequences $A_f(n)$ and $\delta_f(n)$ such that $\alpha(n) = a_f(n)$ satisfies the required conditions. What we prove can then be regarded as a higher-level generalization of this result, since the sequences $p_k(n)$ are coefficients of weight zero weakly holomorphic modular forms on proper subgroups of $SL_2(\mathbb{Z})$. 

\subsection{Asymptotic Behavior of $p_k(n)$}

Hagis' \cite{Hagis71} results on $p_k(n)$ play a key role in the main theorem. Hagis proved an explicit formula for $p_k(n)$ very similar to Rademacher's famous explicit formula for $p(n)$. Hagis uses the Hardy-Ramanujan circle method to obtain his formula for $p_k(n)$, and the same formula can be found by expressing a generating function for $p_k(n)$ by quotients of Dedekind's eta function and expanding the resulting expression by Poincare series. The resulting formula expressible as a sum of modified Kloosterman sums times Bessel functions. Using facts about the asymptotics of Bessel functions and the explicit formulas, useful asymptotics for $p_k(n)$ may be derived. In particular, Hagis proves as a corollary (Corollary 4.1 in \cite{Hagis71}) the asymptotic formula

\begin{equation} \label{Hagis Asymptotic}
    p_k(n) = 2\pi \sqrt{\dfrac{m_k}{k(n + k m_k)}} \cdot I_1\bigg(4\pi \sqrt{m_k(n + k m_k)} \bigg) (1 + O(\exp(-cn^{1/2}))),
\end{equation}

\noindent where $I_1$ is a modified Bessel function of the first kind, $c = c(k)$ is a constant, and $m_k = (k-1)/24k$.

\subsection{Proof of Theorem \ref{Main Theorem}}

We now begin the proof of Theorem \ref{Main Theorem}.

\begin{proof}[Proof of \ref{Main Theorem}]
Fix $d \geq 1$ and $k \geq 2$, and let the sequences $A_k(n)$, $\delta_k(n)$ be defined by

\begin{equation*}
    A_k(n) = 2\pi \sqrt{m_k/n} + \dfrac{3}{4}\sum\limits_{r=1}^{\lfloor 3d/4 \rfloor} \dfrac{(-1)^r}{rn^r} \ \ \text{and} \ \  \delta_k(n) = \bigg(- \sum\limits_{r=2}^\infty  \dfrac{4\pi \sqrt{m_k} \binom{1/2}{r}}{n^{r-1/2}} \bigg)^{1/2}.
\end{equation*}

\noindent Define the renormalized Jensen polynomials $\widehat{J}^{d,n}_{p_k}(X)$ by

\begin{equation} \label{J-Hat Def}
    \widehat{J}^{d,n}_{p_k}(X) := \dfrac{\delta_k(n)^{-d}}{p_k(n)} J^{d,n}_{p_k}\bigg( \dfrac{\delta_k(n)X - 1}{\exp(A_k(n))} \bigg).
\end{equation}

\noindent By application of the Jensen-P\'{o}lya method, it suffices to show that for any fixed $d$ and all $0 \leq j \leq d$,

\begin{equation} \label{Log-Quotient Eqn}
\log\bigg( \dfrac{p_{k}(n+j)}{p_k(n)} \bigg) = A_k(n)j - \delta_k(n)^2j^2 + \sum_{i=3}^k g_i(n) j^i + o\big( \delta_k(n)^d \big) \hspace{.3in} \text{ as } n \to \infty.
\end{equation}
\vspace{.1in}

\noindent where $g_i(n) = o(\delta_k(n)^i)$ holds for each $i$. Using (\ref{Hagis Asymptotic}), we have
\begin{equation*}
    p_k(n) = b_k (n + k m_k)^{-1/2} I_1(4\pi \sqrt{nm_k}) + O(n^{d_k} e^{-c_k \sqrt{n}}),
\end{equation*}
\noindent as $n \to \infty$, where $b_k, c_k > 0$, and $d_k$ are constants which depend at most on $k$. In light of the expansion of the Bessel functions of the first kind at infinity, this implies that $p_k(n)$ has asymptotic expansion to all orders of $1/n$ in the form

$$p_k(n) \sim e^{4\pi \sqrt{nm_k}} n^{-3/4} \exp\bigg( \sum_{r=0}^\infty \frac{a_r}{n^r}\bigg),$$

\noindent where $a_0, a_1, \cdots$ are constants depending only on $k$. Furthermore, when the exponential terms are considered asymptotically, the terms $\dfrac{a_r}{n^r}$ in the sum vanish compared with the term $4\pi \sqrt{nm_k}$ for $r \geq 1$, and so we have

$$p_k(n) \sim e^{a_0 + 4\pi \sqrt{nm_k}} n^{-3/4}.$$

\noindent It follows that for fixed $0 \leq j \leq d$, we have

\begin{eqnarray*}
&&\log\bigg( \dfrac{p_k(n+j)}{p_k(n)} \bigg)\\&\sim& 4\pi \sqrt{m_k} \sum\limits_{r=1}^\infty \binom{1/2}{r} \dfrac{j^r}{n^{r-1/2}} - \dfrac{3}{4} \sum\limits_{r=1}^\infty \dfrac{(-1)^{r-1}j^r}{rn^r}\\
&=& 2\pi \sqrt{m_k}\dfrac{j}{\sqrt{n}}+ 4\pi \sqrt{m_k}\sum\limits_{r=2}^\infty \binom{1/2}{r} \dfrac{j^r}{n^{r-1/2}} + \dfrac{3}{4} \sum\limits_{r=1}^\infty \dfrac{(-1)^{r}j^r}{rn^r}\\
&=& 2\pi \sqrt{\dfrac{m_k}{n}}j  + \dfrac{3}{4} \sum\limits_{r=1}^{\lfloor 3d/4\rfloor} \dfrac{(-1)^{r}j^r}{rn^r}+4\pi \sqrt{m_k}\sum\limits_{r=2}^\infty \binom{1/2}{r} \dfrac{j^r}{n^{r-1/2}}+ \dfrac{3}{4} \sum\limits_{r=\lfloor 3d/4\rfloor + 1}^\infty \dfrac{(-1)^{r}j^r}{rn^r}\\
&=& \Bigg(2\pi \sqrt{\dfrac{m_k}{n}}+ \dfrac{3}{4} \sum\limits_{r=1}^{\lfloor 3d/4\rfloor} \dfrac{(-1)^{r}j^{r-1}}{rn^r}\Bigg)j+\Bigg(\sum\limits_{r=2}^\infty  \dfrac{4\pi \sqrt{m_k}\binom{1/2}{r}j^{r-2}}{n^{r-1/2}}\Bigg)j^2 \\ && \hspace{7.3cm} + \hspace{0.1cm} \dfrac{3}{4} \sum\limits_{r=\lfloor 3d/4 \rfloor + 1}^\infty \dfrac{(-1)^{r}j^r}{rn^r}.\\
\end{eqnarray*}

 \noindent Then (\ref{Log-Quotient Eqn}) follows since $\log\bigg( \dfrac{p_{k}(n+j)}{p_k(n)} \bigg) - A_k(n)j + \delta_k(n)^2 j^2 = O(n^{- \lfloor 3d/4 \rfloor - 1}) = o(\delta_k(n)^d)$.
\end{proof}

\section*{Acknowledgements}

We would like to thank Ken Ono for suggesting this problem and his guidance. We would also like to thank Cormac O'Sullivan and Jacques Gélinas for comments on an earlier version of the paper.

\bibliographystyle{plain}
\bibliography{citation}

\begin{thebibliography}{10}

\bibitem{Chen17}
William Y.~C. Chen.
\newblock {\em The spt-function of Andrews}, page 141–203.
\newblock London Mathematical Society Lecture Note Series. Cambridge University
  Press, 2017.

\bibitem{CJW19}
William Y.~C. Chen, Dennis X.~Q. Jia, and Larry X.~W. Wang.
\newblock Higher order {T}ur\'{a}n inequalities for the partition function.
\newblock {\em Trans. Amer. Math. Soc.}, 372(3):2143--2165, 2019.

\bibitem{DP15}
Stephen DeSalvo and Igor Pak.
\newblock Log-concavity of the partition function.
\newblock {\em Ramanujan J.}, 38(1):61--73, 2015.

\bibitem{Dimitrov98}
Dimitar~K. Dimitrov.
\newblock Higher order {T}ur\'{a}n inequalities.
\newblock {\em Proc. Amer. Math. Soc.}, 126(7):2033--2037, 1998.

\bibitem{GORZ_1}
Michael Griffin, Ken Ono, Larry Rolen, and Don Zagier.
\newblock Jensen polynomials for the {R}iemann zeta function and other
  sequences.
\newblock {\em Proc. Natl. Acad. Sci. USA}, 116(23):11103--11110, 2019.

\bibitem{Hagis71}
Peter Hagis, Jr.
\newblock Partitions with a restriction on the multiplicity of the summands.
\newblock {\em Trans. Amer. Math. Soc.}, 155:375--384, 1971.

\bibitem{LW19}
Hannah Larson and Ian Wagner.
\newblock Hyperbolicity of the partition {J}ensen polynomials.
\newblock {\em Res. Number Theory}, 5(2):Art. 19, 12, 2019.

\bibitem{Nicolas78}
Jean-Louis Nicolas.
\newblock Sur les entiers $n$ pour lesquels il y a beaucoup de groupes
  ab\'eliens d'ordre $n$.
\newblock {\em Annales de l'Institut Fourier}, 28(4):1--16, 1978.

\bibitem{NO63}
Nikola Obrechkoff.
\newblock Zeros of polynomials.
\newblock {\em Bulgian Academy of Science (Sofia)}, 1963.
\newblock (in Bulgarian). English translation (by I. Dimovski and P. Rusev)
  published by The Marin Drinov Academic Publishing House, Sofia, 2003.

\bibitem{Polya1927}
George Pólya.
\newblock \"{U}ber die algebraisch-funktionentheoretischen {U}ntersuchungen von
  {J}. {L}. {W}. {V}. {J}ensen.
\newblock {\em Kgl. Danske Videnskab. Selska Matematisk-fysiske meddelelser},
  7:3--33, 1927.

\bibitem{PS1914}
J.~Schur and G.~P\'{o}lya.
\newblock \"{U}ber zwei {A}rten von {F}aktorenfolgen in der {T}heorie der
  algebraischen {G}leichungen.
\newblock {\em J. Reine Angew. Math.}, 144:89--113, 1914.

\bibitem{Szego48}
G.~Szeg\"{o}.
\newblock On an inequality of {P}. {T}ur\'{a}n concerning {L}egendre
  polynomials.
\newblock {\em Bull. Amer. Math. Soc.}, 54:401--405, 1948.

\end{thebibliography}

\end{document}